\newcommand{\ilim}{\mathop{\varprojlim}\limits}
\newtheorem{thm}{Theorem}
\newtheorem{cor}[thm]{Corollary}
\newtheorem{lem}[thm]{Lemma}
\newtheorem*{qu}{Question}
\newcommand{\R}{\mathbb{R}}
\newcommand{\N}{\mathbb{N}}
\begin{document}

\title{Spaces $\ell$-Dominated by $I$ or $\R$\footnote{\noindent {\bf 2000
      Mathematics Subject Classification}: 
54C35, 54E45; 26B40, 54F45. \vskip .1em {\bf Key Words and Phrases}:
function space, $\ell$-equivalence, $\ell$-dominance, fd-height, basic family.}}

\author{Paul Gartside\footnote{\emph{Corresponding author} Department
    of Mathematics, University of Pittsburgh, Pittsburgh, PA 15260, USA, email:
    gartside@math.pitt.edu, tel: 412-624-7761, fax: 412-624-8397} and Ziqin Feng\footnote{Department of Mathematics and Statistics, Auburn University, Auburn, AL~36849, USA}}

\date{October  2015}

\maketitle

\begin{abstract} If $X$ is compact metrizable and has finite fd-height then the unit interval, $I$, $\ell$-dominates $X$, in other words, there is a continuous linear map of $C_p(I)$ onto $C_p(X)$. If the unit interval $\ell$-dominates a space $X$ then $X$ is compact metrizable and has countable fd-height. Similar results are given for spaces $\ell$-dominated by the reals.
\end{abstract}

\section{Introduction}

A space $X$ is said to be $\ell$--dominate another space $Y$
 if there is a linear continuous map of $C_p(X)$ onto $C_p(Y)$; and $X$ is
 $\ell$-equivalent to $Y$ if there is a linear homeomorphism
 of $C_p(X)$ and $C_p(Y)$. (For any space $Z$, $C(Z)$ is the set of all continuous real valued maps on $Z$, and $C_p(Z)$ is $C(Z)$ with the topology of pointwise convergence. All spaces are assumed to be Tychonoff.)
Extensive work has been done on the problems of which topological properties are preserved by $\ell$-equivalence, and which spaces are $\ell$-equivalent to standard spaces (see the surveys \cite{A_RP1, M_RP2}). For example in the first direction it is known that compactness, $\sigma$-compactness, hemicompactness, the $k_\omega$ property \cite{Ar}, and covering dimension \cite{Pest}, are preserved by $\ell$-equivalence, while results of Gorak \cite{Gorak} and Matrai \cite{Matrai} give a characterization of those spaces $\ell$-equivalent to the closed unit interval $I$. 
It seems natural to try to extend these results on $\ell$-equivalence to the context of $\ell$-dominance, in particular to determine which spaces are $\ell$-dominated by the closed unit interval. 
\begin{thm}\label{main1} \ 

(a) If $K$ is compact metrizable and has finite fd-height then the closed unit interval $\ell$-dominates $K$.  

(b) If a space $X$ is $\ell$-dominated by $I$ then $X$ is compact metrizable and has countable fd-height (equivalently, is strongly countable dimensional). 
\end{thm}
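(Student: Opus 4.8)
The plan is to prove the two implications by opposite methods: an explicit construction of continuous linear surjections for (a), carried out by induction on fd-height, and a duality (adjoint) argument reducing (b) to the structure of the free locally convex space over $I$. For the base case of (a), finite-dimensional compact metric $K$, the crucial point is that $I$ $\ell$-dominates every cube $I^n$, and here the tool is the Kolmogorov superposition theorem: there exist fixed continuous ``inner'' functions $\phi_{q,p}\colon I\to\R$ ($0\le q\le 2n$, $1\le p\le n$) such that every $g\in C(I^n)$ admits a representation
\[
 g(x_1,\dots,x_n)=\sum_{q=0}^{2n}\Phi_q\Bigl(\sum_{p=1}^{n}\phi_{q,p}(x_p)\Bigr),
\]
with continuous ``outer'' functions $\Phi_q$. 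Since $I^n$ is compact the inner sums range over a fixed compact interval, so each $\Phi_q$ may be taken on a compact interval rescaled to $I$. The assignment $(\Phi_0,\dots,\Phi_{2n})\mapsto g$ is linear, evaluating $g$ at a point of $I^n$ uses only finitely many values of the $\Phi_q$ and so is continuous for the pointwise topologies, and Kolmogorov's theorem makes it onto; thus $C_p(I)^{2n+1}\to C_p(I^n)$ is a continuous linear surjection. As $2n+1$ disjoint copies of $I$ embed as a closed subset $F\subseteq I$, Tietze extension makes restriction $C_p(I)\to C_p(F)\cong C_p(I)^{2n+1}$ a linear surjection, and composing gives $C_p(I)\to C_p(I^n)$. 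Finally a $d$-dimensional compact metric $K$ embeds as a closed subset of $I^{2d+1}$ by the Menger--N\"obeling theorem, and restriction to a closed subset is again a linear surjection, so $I$ $\ell$-dominates $K$.

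For the inductive step let $K$ have fd-height $n>1$, let $K'$ be its first fd-derivative (the closed set of points with no finite-dimensional neighbourhood), which has fd-height $n-1$, and let $U=K\setminus K'$, which is open and locally finite-dimensional. By induction fix a linear surjection $C_p(I)\to C_p(K')$, exhaust $U$ by compact finite-dimensional pieces handled by the base case, and amalgamate: a given $g\in C(K)$ is lifted on $K'$ through the inductive operator and corrected over $U$ by a partition of unity subordinate to the exhaustion, with all of the resulting finite supports laid out inside $I$ according to a basic family. \textbf{This amalgamation is the main obstacle.} The pieces of $U$ accumulating on $K'$ have unboundedly growing dimension, so their supports in $I$ must be arranged to converge coherently as one approaches $K'$; verifying that the combined support and coefficient maps remain continuous at points of $K'$ (so that the operator actually lands in $C_p(K)$) while staying onto is precisely the bookkeeping the basic-family apparatus is designed to supply.

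For (b), suppose $u\colon C_p(I)\to C_p(X)$ is a continuous linear surjection. Because $u$ is onto, its adjoint $u^{*}$ is a topological embedding of the dual $L_p(X)$ into $L_p(I)$ (finitely supported measures with the weak${}^{*}$ topology), and composing with the canonical embedding $X\hookrightarrow L_p(X)$ yields a topological embedding $X\hookrightarrow L_p(I)$. Writing $L_p(I)=\bigcup_n L_{p,n}(I)$ for the stratification by measures supported on at most $n$ points, one argues: $I$ is cosmic, hence $L_p(I)$ and its subspace $X$ are cosmic; and, since $I$ is one-dimensional, each closed stratum $L_{p,n}(I)$ is finite-dimensional, so $L_p(I)$ and therefore $X$ is strongly countable dimensional, i.e.\ (by the equivalence in the statement) has countable fd-height. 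Compactness of $X$ is obtained separately, by promoting the $\sigma$-compactness of $L_p(I)$ using compactness of $I$, after which compact-plus-cosmic gives metrizable. \textbf{The technical cores here are} the dimension computation that each $L_{p,n}(I)$ is finite-dimensional --- equivalently, that the free locally convex space over the interval is strongly countable dimensional --- together with the upgrade from $\sigma$-compactness to genuine compactness, which requires bounding the number of support points and the coefficient sizes of $u^{*}$ uniformly over $X$.
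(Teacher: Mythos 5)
Your overall architecture matches the paper's (Kolmogorov superposition for the finite-dimensional base case, induction on fd-height with a partition of unity over $K\setminus K'$, and a dual/free-locally-convex-space argument for (b)), but the inductive step of (a) — which you yourself flag as ``the main obstacle'' — is exactly where the proof lives, and you have not supplied the idea that makes it work. The mechanism is a quantitative strengthening of ``continuous linear surjection'': the paper works throughout with \emph{$c$-good} maps, i.e.\ maps $L$ with $\|L\|\le 1$ such that every $f$ in the range has a preimage $g$ with $\|g\|\le c\|f\|$ (the ``small kernel'' property). In the inductive step the closures $C_n=\overline{U_n}$ of the locally finite cover of $U=K\setminus K'$ are handled by $8^k$-good maps $l_n'\colon C_p(I_n)\to C_p(C_n)$ on intervals $I_n$ accumulating at $0$; given $f\in C_p(K;K')$ one has $\|f\restriction C_n\|\to 0$, and the small-kernel bound $\|g_n\|\le 8^k\|f\restriction C_n\|$ is precisely what forces the glued lift $g$ to be continuous at the accumulation point. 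Without it the $g_n$ could have norms bounded away from zero and the operator would fail to be onto. This has two consequences your sketch misses: first, the base case cannot be the classical qualitative Kolmogorov theorem you invoke, since that gives no control of $\|\Phi_q\|$ in terms of $\|g\|$; the paper needs the constructive Sprecher/Braun--Griebel version, whose geometric error decay yields $c$-good maps $C_p(I)\to C_p(I^n)$ for every $c>1$. Second, the constant deteriorates ($8^{k+1}$ at height $k+1$), which is why the theorem is stated only for \emph{finite} fd-height and the induction cannot be continued transfinitely — a restriction your proposal neither uses nor explains.

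For (b) your adjoint/stratification route is essentially that of Leiderman--Morris--Pestov, which the paper cites as an alternative proof, so the approach is legitimate; but as written it also has gaps. Strong countable dimensionality does not pass to arbitrary subspaces, so you must check that $X\hookrightarrow L_p(X)\hookrightarrow L_p(I)$ are \emph{closed} embeddings, and that the strata are not merely continuous images of $(\R\times I)^n$ but are covered by closed sets homeomorphic to closed subsets of $I^n\times\R^n$ with bounded coefficients. More seriously, the ``upgrade from $\sigma$-compactness to genuine compactness'' that you defer is a real theorem, not bookkeeping: a closed subspace of a $\sigma$-compact space is only $\sigma$-compact. The paper gets compactness from Baars' support-map result (a compact space $\ell$-dominates only compact spaces among $\mu$-spaces, and $X$ is $\mu$ because it is cosmic), and proves the $k_\omega$/hemicompact preservation by a separate closed-graph and barrelledness argument. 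You should either import those results explicitly or carry out the uniform bound on supports and coefficients that you allude to.
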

We use this result to derive an analogous theorem for spaces $\ell$-dominated by the reals, $\R$. Recall a space is submetrizable if it has a coarser metrizable topology. A $k_\omega$ sequence for a space $X$ is an increasing sequence of compact subsets $(K_n)_n$ of $X$ which is cofinal in the family of all compact subsets of $X$, and if $A$ is a subset of $X$ such that $A \cap K_n$ is closed in $K_n$ for all $n$, then $A$ is closed in $X$. A space is $k_\omega$ if it has a $k_\omega$ sequence. Note that $k_\omega$ spaces are $k$ spaces.

\begin{thm}\label{main2} \ 

(a) If $X$ is submetrizable and has a $k_\omega$ sequence of compact sets each of finite fd-height then the reals $\ell$-dominates $X$.  

(b) If a space $X$ is $\ell$-dominated by $\R$ then $X$ is submetrizable, has a  $k_\omega$ sequence and is strongly countable dimensional.
\end{thm}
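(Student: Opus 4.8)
The plan is to treat parts (a) and (b) separately, in each case using that both $\R$ and $X$ carry a $k_\omega$ structure and translating the topological hypotheses into the existence (for (a)) or the analysis (for (b)) of continuous linear surjections of function spaces. Throughout I use that $\ell$-dominance is transitive (compose continuous linear surjections) and the standard fact that for a $k_\omega$ space $Z$ with $k_\omega$ sequence $(L_n)_n$ there is a natural linear homeomorphism $C_p(Z) \cong \ilim_n C_p(L_n)$, the limit being along the restriction maps $r_n \colon C_p(L_{n+1}) \to C_p(L_n)$; concretely, $C_p(Z)$ is the subspace of \emph{threads} in $\prod_n C_p(L_n)$.

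For (a), write $\R = \bigcup_m [-m,m]$ and fix inside $\R$ the closed copy $A = \bigcup_n [2n,2n+1]$ of the topological sum $\bigoplus_n I$; as $A$ is closed in the metric space $\R$, Tietze's theorem makes the restriction $C_p(\R) \to C_p(A) = \prod_n C_p(I)$ a continuous linear surjection. Next, each $K_n$ is a compact subspace of the submetrizable $X$, hence compact metrizable, and has finite fd-height, so Theorem~\ref{main1}(a) supplies continuous linear surjections $C_p(I) \to C_p(K_n)$, whose product is a continuous linear surjection $\prod_n C_p(I) \to \prod_n C_p(K_n)$. It therefore remains to produce a continuous linear surjection $P \colon \prod_n C_p(K_n) \to C_p(X) = \ilim_n C_p(K_n)$, that is, a continuous linear retraction of the product onto its thread subspace; composing the three maps shows that $\R$ $\ell$-dominates $X$.

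Constructing $P$ is the main obstacle in (a). I would build it from the difference operator $\partial \colon \prod_n C_p(K_n) \to \prod_n C_p(K_n)$, $\partial((g_n)_n) = (g_n - r_n g_{n+1})_n$, whose kernel is exactly the thread space $C_p(X)$. The key input is a continuous linear extension operator $s_n \colon C_p(K_n) \to C_p(K_{n+1})$ with $r_n \circ s_n = \mathrm{id}$: the Dugundji operator on the compact metric space $K_{n+1}$ works, because at each point its value is a \emph{finite} convex combination of values on $K_n$ (the defining partition of unity being locally finite) and so is a continuous linear functional in the pointwise topology. Using the $s_n$ I solve $\partial \sigma = \mathrm{id}$ by the telescoping recursion $\sigma((d_n)_n) = (g_n)_n$ with $g_1 = 0$ and $g_{n+1} = s_n(g_n - d_n)$, which is plainly continuous and linear; then $P = \mathrm{id} - \sigma \circ \partial$ is a continuous linear projection onto $\ker \partial = C_p(X)$.

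For (b), let $T \colon C_p(\R) \to C_p(X)$ be a continuous linear surjection and pass to the dual (support) map: the adjoint of $T$ sends each evaluation $\delta_x$ to a finitely supported functional $\sum_i a_i \delta_{t_i} \in L_p(\R)$, and I set $\mathrm{supp}(x) = \{t_i\} \subseteq \R$. With $\R = \bigcup_m [-m,m]$ and $X_m = \{x \in X : \mathrm{supp}(x) \subseteq [-m,m]\}$ one has $X = \bigcup_m X_m$. The crux, which I expect to be the most technical step, is the standard support analysis showing (via lower semicontinuity of $\mathrm{supp}$ and surjectivity of $T$) that each $X_m$ is compact and that $(X_m)_m$, or suitable closed enlargements, is a $k_\omega$ sequence for $X$. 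Granting this, $X$ is $k_\omega$ hence normal, each compact $X_m$ is closed, and $C_p(X) \to C_p(X_m)$ is onto; moreover $T(f)|_{X_m}$ depends only on $f|_{[-m,m]}$, so $T$ factors through the restriction $C_p(\R)\to C_p([-m,m])$ to give a continuous linear surjection $C_p([-m,m]) \to C_p(X_m)$, i.e. $I$ $\ell$-dominates $X_m$. Theorem~\ref{main1}(b) then makes each $X_m$ compact metrizable and strongly countable dimensional. Finally $X = \bigcup_m X_m$ is a countable union of closed strongly countable dimensional subspaces, hence strongly countable dimensional; and, the $X_m$ being compact metrizable, extending a countable point-separating family off each $X_m$ by Tietze yields a countable point-separating family in $C(X)$, hence a continuous injection $X \to \R^\omega$ and submetrizability.
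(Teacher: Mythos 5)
Your part (a) is correct and is in substance the paper's own construction: the paper fixes a discrete family of intervals $I_n$ in $\R$, Dugundji extenders $e_n^{n+1}:C_p(K_n)\to C_p(K_{n+1})$, and defines the surjection onto $\ilim C_p(K_n)$ by the telescoping recursion $\pi_{n+1}\circ L=e_n^{n+1}\circ(\pi_n\circ L)+L_{n+1}\circ\rho_{n+1}$, where $L_{n+1}$ surjects onto $C_p(K_{n+1};K_n)$. If you unwind your retraction $P=\mathrm{id}-\sigma\circ\partial$ coordinate by coordinate you get exactly this recursion, so your packaging (a continuous linear projection of $\prod_n C_p(K_n)$ onto the thread space) is a clean and correct reformulation, not a different proof.

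Part (b) has a genuine gap, and it sits precisely at the two points you flag with ``granting this.'' First, Baars' support result only gives that $X_m=\mathop{supp}^{-1}[-m,m]$ is a closed \emph{bounded} subset of $X$; closed and bounded does not imply compact in a general Tychonoff space (a pseudocompact non-compact space is closed and bounded in itself). You must first show $X$ is a $\mu$-space. The paper does this by observing that $\R$ is $\sigma$-compact and submetrizable, hence cosmic, so $C_p(\R)$, its continuous linear image $C_p(X)$, and finally $X$ itself are cosmic, hence Lindel\"{o}f and $\mu$. Without some such argument your $X_m$ need not be compact and the rest of (b) does not get off the ground. Second, even once the $X_m$ are compact and cofinal among compact subsets of $X$ (which does follow from the support analysis), that only makes $X$ hemicompact; it does not give the $k_\omega$ property, i.e.\ that a set meeting each $X_m$ in a closed set is closed. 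This is the hardest step in the paper's proof and requires real input: $C_k(X)$ is barrelled by Nachbin--Shirota (since $X$ is $\mu$), $C_k(\R)$ is completely metrizable by Warner's theorem, and a closed-graph/almost-continuity argument shows the compact-open topology on $C(X)$ coincides with the quotient topology induced by $L$, whence $C_k(X)$ is completely metrizable and $X$ is $k_\omega$ by Warner again. Nothing in ``lower semicontinuity of $\mathop{supp}$ and surjectivity of $T$'' substitutes for this.

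The remainder of your (b) is fine and in fact pleasantly more elementary than the paper's: once each $X_m$ is compact and $(X_m)_m$ is a $k_\omega$ sequence, the factorization $C_p([-m,m])\to C_p(X_m)$ through the support sets is valid, Theorem~\ref{main1}(b) gives that each $X_m$ is compact metrizable and strongly countable dimensional, and your countable-union and point-separating-family arguments correctly yield strong countable dimensionality and submetrizability of $X$. (The paper instead transfers strong countable dimensionality via closed embeddings into free locally convex spaces; your route through Theorem~\ref{main1}(b) avoids that machinery, at the cost of invoking the Leiderman--Morris--Pestov result.) But as written, the proof of (b) is incomplete at its two essential points.
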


Recall that a space  is strongly countable dimensional if it is the the countable union of closed finite dimensional subspaces. We define fd-height as follows. 
Let $K$ be a (non-empty) compact metrizable space.
Define $I(K) = \bigcup \{ U : U$ is open and finite dimensional$\}$, $K^{[0]}=K$, $K^{[\alpha +1]}\ = K^{[\alpha]} \setminus I(K^{[\alpha]})$, and $K^{[\lambda]} = \bigcap_{\alpha < \lambda} K^{[\alpha]}$ (for limit $\lambda$).
Note that $I(K)$ is always open, and, hence all the $K^{[\alpha]}$ are closed, and are well-ordered by reverse inclusion. By Baire Category $I(K)$ is non-empty if $K$ is strongly countable dimensional.  Further note that  $I(K)=K$ if and only if $K$ is finite dimensional.
From these observations we easily deduce that  $K$ is strongly countable dimensional if and only if $K^{[\alpha]} = \emptyset$ for some countable ordinal $\alpha$.
Further, when $K$ is strongly countable dimensional, there is a minimal $\alpha'$ so that $K^{[\alpha']} = \emptyset$. We call $\alpha'$ the \emph{fd-height} of $K$. For limit $\lambda$, $K^{[\lambda]}$ cannot be empty, so  $\alpha'$ is always a successor, $\alpha' = \alpha +1$, and  $K^{[\alpha]}$ is compact metrizable and finite dimensional.

We remark that there are compact metrizable spaces with  fd-height $k$ for every natural number $k$. Indeed, let $F_1$ be any finite dimensional compact metrizable space ($I^n$, or just a single point), then $F_1$ has fd-height $1$. The one point compactification, $F_2$, of $\bigoplus_n I^n$, has fd-height $2$. Inductively, given $F_k$, a compact metrizable space of fd-height $k \ge 2$, it is well known that there is a metrizable compactification of $\N$ with remainder $F_k$, replacing each isolated point in this compactification with a copy of $F_2$, gives a compact metrizable space  $F_{k+1}$ with fd-height $k+1$.

Part~(a) of Theorem~\ref{main1} is established in Section~\ref{Ildom}.
The proof that for every compact, metrizable space $K$ with finite fd-height there is a continuous linear map of $C_p(I)$ onto $C_p(K)$ proceeds by  induction on fd-height. To keep the induction going the linear maps need to have the stronger property of being $c$-good (defined and discussed below at the start of Section~\ref{Ildom}). In order to start (and end) the induction, $c$-good linear maps of $C_p(I)$ onto $C_p(K)$ are needed for \emph{finite dimensional} compact metrizable  spaces $K$. These come from Kolmogorov's solution of Hilbert's 13th Problem, which asks whether every continuous function of three variables can be written as a composition of continuous functions of two variables). Kolmogorov proved that for every $n \ge 2$ there are maps $\phi_1, \ldots , \phi_r$ in $C(I^n)$ so that every $f$ in $C(I^n)$ can be written in the form $f = \sum_i g_i \circ \phi_i$ of some $g_i$ in $C(\R)$, and further the $\phi_i$ can be written as a sum of functions of one variable. It is easy to see that the $g_i$ can be chosen to coincide. Then the map $g \mapsto \sum_i g \circ \phi_i$ is a continuous linear map of $C_p(I)$ onto $C_p(I^n)$. With a sufficiently careful choice of the $\phi_i$ this map can be made to be $c$-good (Theorem~\ref{cg}). The authors have  shown \cite{FG2} that if a Kolmogorov type theorem holds for a space $X$ then that space must be locally compact, separable metrizable and finite dimensional. Thus the linear maps witnessing part~(a) of  Theorem~\ref{main1} (and Theorem~\ref{main2}) cannot come from a Kolmogorov type theorem.

Part (a) of Theorem~\ref{main2} is established in Section~\ref{Rldom} by careful gluing of continuous linear surjections coming from part~(a) of  Theorem~\ref{main1}.

In Section~\ref{Pldom} we establish part~(b) for Theorem~\ref{main2}. Our results also prove part~(b) of Theorem~\ref{main1}, but  Leiderman et al proved this in \cite{LMP}. 
Baars \cite{Baars} showed that if $X$ $\ell$-dominates $Y$ and $X$ is ($\sigma$-)compact then so is $Y$ provided $Y$ is a $\mu$-space (bounded subsets are closed). We show (Lemma~\ref{ld_imp}) that the same is true for hemicompactness and $k_\omega$, under the same assumption on $Y$. We deduce (Lemma~\ref{ld_kwsbscd}) that if $X$ $\ell$-dominates $Y$ and $X$ is $k_\omega$, submetrizable and strongly countable dimensional then so is $Y$ (with no restriction on $Y$). Since both $\R$ and $I$ are $k_\omega$, submetrizable and strongly countable dimensional, and $I$ is additionally compact, part (b)  of  Theorems~\ref{main1} and~\ref{main2} both hold. 

There remain two striking open questions:
\begin{qu} \ 

(1) If $K$ is compact metrizable and strongly countable dimensional then is $K$ $\ell$-dominated by $I$? 

(2) If $K$ is $\ell$-dominated by $I$ then does $K$ have finite fd-height?
\end{qu}
(Similar questions also pertain to spaces $\ell$-dominated by the reals, but their solution should follow from the case for $I$ combined with Theorem~\ref{key2}.)

In the proof of part~(a) of Theorem~\ref{main1} the constant $c$ involved in the relevant `$c$-good map' (see discussion above) increases unboundedly with the fd-height. This prevents the induction continuing transfinitely. The authors do not see how to avoid this, so we tentatively conjecture that Question (2) has a positive answer.

\section{Constructing Spaces $\ell$-dominated by $I$}\label{Ildom}

\subsection{$c$--Good and Other Linear Maps}
Let $K$ be a compact metrizable space. Note that in addition to the topology of pointwise convergence, $C(K)$ carries the 
supremum norm, $\|\cdot\|$. 
For any closed subspace $A$ of $X$ write $C(K;A) = \{ f \in C(K) : f \restriction A =0\}$. Of course $C(K;\emptyset)=C(K)$. Note that $C(X;A)$ is a closed subspace of $C_p(X)$, and hence is closed with respect to the supremum norm.  

Let $K_1$ and $K_2$ be compact, and let $A_1$ and $A_2$ be closed subsets of $K_1$ and $K_2$, respectively. Let $L$ a map of $C_p(K_1;A_1)$ to $C_p(K_2;A_1)$. Then $L$ is \emph{$c$-good} if $L$ is a continuous linear map, $\| L \| \le 1$  and for every $f$ in $C(K_2;A_2)$ there is a $g \in C(K_1;A_1)$ such that $L(g)=f$ and $\| g \| \le c . \|f\|$. We call the last condition the `small kernel' property. Obviously the small kernel property for $L$ implies $L$ is a surjection.
An easy calculation gives:
\begin{lem}\label{comp_cgood}
For $i=1,2$ let $L_i : C_p(K_i;A_i) \to C_p(K_{i+1};A_{i+1})$ be $c_i$-good. Then $(L_2 \circ L_1) : C_p(K_1;A_1) \to C_p(K_3;A_3)$ is $(c_1 c_2)$-good.
\end{lem}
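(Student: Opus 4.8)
The plan is to verify the three defining clauses of a $(c_1 c_2)$-good map for the composite $L_2 \circ L_1 : C_p(K_1;A_1) \to C_p(K_3;A_3)$ one at a time, drawing each time on the corresponding clause for $L_1$ and $L_2$. Since each $L_i$ is by hypothesis a continuous linear map, and the composition of continuous linear maps between topological vector spaces is again continuous and linear, the first requirement is immediate.

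For the norm bound I would invoke submultiplicativity of the operator norm taken with respect to the supremum norm on the relevant $C(K_i;A_i)$: one has $\|L_2 \circ L_1\| \le \|L_2\|\,\|L_1\| \le 1 \cdot 1 = 1$, using the hypotheses $\|L_1\| \le 1$ and $\|L_2\| \le 1$ that come from $L_1$ being $c_1$-good and $L_2$ being $c_2$-good.

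The substantive (though still routine) clause is the small-kernel property, which I would obtain by chaining the two given small-kernel properties in the correct order. Given $f \in C(K_3;A_3)$, first apply the small-kernel property of the $c_2$-good map $L_2$ to produce $h \in C(K_2;A_2)$ with $L_2(h)=f$ and $\|h\| \le c_2\,\|f\|$. Then apply the small-kernel property of the $c_1$-good map $L_1$ to this $h$, yielding $g \in C(K_1;A_1)$ with $L_1(g)=h$ and $\|g\| \le c_1\,\|h\|$. Combining the two estimates gives $(L_2 \circ L_1)(g) = L_2(h) = f$ together with $\|g\| \le c_1\,\|h\| \le c_1 c_2\,\|f\|$, which is precisely the small-kernel property for $L_2 \circ L_1$ with constant $c_1 c_2$.

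There is no genuine obstacle here; the only points needing care are the ordering of the chaining — one must pull $f$ back through $L_2$ first and then through $L_1$, not the reverse — and the observation that the two norm estimates combine multiplicatively rather than additively, which is exactly why the resulting constant is the product $c_1 c_2$ and why the small-kernel bounds must be stated multiplicatively in the norm.
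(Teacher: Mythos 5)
Your proof is correct and is precisely the ``easy calculation'' the paper alludes to without writing out: compose for continuity and linearity, multiply the norm bounds, and chain the two small-kernel pullbacks (through $L_2$ first, then $L_1$) to get the constant $c_1c_2$. Nothing further is needed.
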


We will also need Dugundji's Extension Theorem:
\begin{lem}[Dugundji]\label{ext_map} Let $A$ be a closed subspace of metrizable $X$. 
There is a map $e : C_p(A) \to C_p(X)$ such that $e(f)\restriction A
=f$, and such that $e$ is linear, pointwise continuous, $\|e\|=1$
and respects the convex hull.
\end{lem}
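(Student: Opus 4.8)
The plan is to give the standard Dugundji construction, whose whole point is that the extension operator is assembled from data depending only on $X$ and $A$ (and not on $f$), so that linearity is automatic. Fix a compatible metric $d$ on $X$. Since $A$ is closed, $d(x,A)>0$ for every $x \in X \setminus A$, and the open subspace $X \setminus A$ is itself metrizable, hence paracompact. First I would cover $X \setminus A$ by the balls $B(x, d(x,A)/2)$ and pass to a locally finite open refinement $\{U_i\}_{i \in I}$ with a subordinate partition of unity $\{p_i\}_{i \in I}$. For each $i$ choose a point $x_i \in U_i$ and a point $a_i \in A$ with $d(x_i,a_i) \le 2\,d(x_i,A)$. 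With these choices fixed once and for all, define $e(f)(x) = f(x)$ for $x \in A$ and $e(f)(x) = \sum_i p_i(x) f(a_i)$ for $x \in X \setminus A$.

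Several of the required properties are then immediate. Linearity of $e$ and the identity $e(f)\restriction A = f$ are read directly off the formula. For any $x \in X \setminus A$ the value $e(f)(x)$ is a convex combination of the numbers $f(a_i)$ (since $p_i \ge 0$ and $\sum_i p_i(x)=1$), so the range of $e(f)$ lies in the convex hull of the range of $f$; this simultaneously gives the `respects the convex hull' clause and the bound $\|e(f)\| \le \|f\|$, while $e(f)\restriction A = f$ forces equality, so $\|e\|=1$. Pointwise continuity of $e \colon C_p(A) \to C_p(X)$ reduces to checking that $f \mapsto e(f)(x)$ is continuous for each fixed $x$: if $x \in A$ this is evaluation at $x$, and if $x \in X \setminus A$ then local finiteness of $\{p_i\}$ makes $e(f)(x) = \sum_i p_i(x) f(a_i)$ a \emph{finite} linear combination of the evaluations $f \mapsto f(a_i)$, hence continuous.

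The only genuine work is continuity of $e(f)$ as a function on $X$. On $X \setminus A$ this is clear from continuity and local finiteness of the $p_i$. The main obstacle is continuity at a boundary point $a_0 \in A$, and here the geometry of the refinement is exactly what is needed. Given $\epsilon>0$, continuity of $f$ supplies $\eta>0$ with $|f(a)-f(a_0)|<\epsilon$ whenever $d(a,a_0)<\eta$. Writing $e(f)(x)-f(a_0) = \sum_i p_i(x)\bigl(f(a_i)-f(a_0)\bigr)$, it suffices to find $\rho>0$ so that $d(x,a_0)<\rho$ forces $d(a_i,a_0)<\eta$ for every $i$ with $p_i(x)>0$. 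I would establish this by a chain of triangle inequalities: for such an $i$ the refining ball containing $U_i$ has center within $O\bigl(d(x,A)\bigr)$ of $A$, whence $x_i$, and then $a_i$, lie within a bounded multiple of $d(x,A) \le d(x,a_0) < \rho$ of $a_0$; choosing $\rho$ a small enough fraction of $\eta$ then makes $d(a_i,a_0)<\eta$, and since the $p_i(x)$ sum to $1$ we conclude $|e(f)(x)-f(a_0)|<\epsilon$. This estimate is the crux of the theorem, and is precisely where closedness of $A$ and the $d(x,A)/2$ scaling of the initial cover are used.
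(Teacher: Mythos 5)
The paper gives no proof of this lemma --- it is quoted as Dugundji's classical extension theorem and used as a black box --- so there is no ``paper's proof'' to compare against. Your argument is the standard Dugundji construction and is correct: the cover of $X\setminus A$ by the balls $B(x,d(x,A)/2)$, a locally finite refinement with subordinate partition of unity, the choice of near-optimal points $a_i\in A$, and the formula $e(f)(x)=\sum_i p_i(x)f(a_i)$ deliver linearity, the extension property, preservation of convex hulls (hence $\|e\|=1$), and pointwise continuity exactly as you say. The one step you leave as a sketch, the triangle-inequality chain at a point $a_0\in A$, does close up: if $p_i(x)>0$ then $x$ lies in a refining ball $B(z_i,d(z_i,A)/2)$, whence $d(z_i,A)<2d(x,A)$, $d(x_i,A)<3d(x,A)$, $d(x_i,a_i)<6d(x,A)$ and finally $d(a_i,a_0)\le d(a_i,x)+d(x,a_0)\le 9\,d(x,a_0)$, so $\rho=\eta/9$ works. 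It would be worth one explicit sentence noting that continuity at points of $A$ also covers interior points of $A$ (where $e(f)=f$ nearby) and that paracompactness of the metrizable subspace $X\setminus A$ is what supplies the partition of unity; otherwise this is a complete and faithful reconstruction of the theorem the authors cite.
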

It follows that $c$-good maps between compact metrizable spaces are well behaved under restriction in the range and extension in the domain.
\begin{lem}\label{cl_sub}
If $A_1 \subseteq K_1 \subseteq K_1'$ and $A_2 \subseteq K_2' \subseteq K_2$ where all spaces are compact metrizable, and $L: C_p(K_1,A_1) \to C_p(K_2,A_2)$ is $c$-good, then $L' : C_p(K_1';A_1) \to C_p(K_2';A_2)$ is also $c$-good  where $L'(g) = L(g\restriction K_1) \restriction K_2'$.
\end{lem}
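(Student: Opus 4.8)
The plan is to verify the three properties that define a $c$-good map—continuity and linearity, the norm bound $\|L'\|\le 1$, and the small kernel property—directly from the corresponding properties of $L$, using Dugundji extension (Lemma~\ref{ext_map}) to handle the enlargement of the domain from $K_1$ to $K_1'$ and restriction to handle the shrinkage of the range from $K_2$ to $K_2'$.

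First I would check that $L'$ is well-defined as a map $C_p(K_1';A_1)\to C_p(K_2';A_2)$. Given $g\in C(K_1';A_1)$, its restriction $g\restriction K_1$ lies in $C(K_1;A_1)$ since $A_1\subseteq K_1$ and $g$ vanishes on $A_1$; then $L(g\restriction K_1)\in C(K_2;A_2)$ vanishes on $A_2\subseteq K_2'$, so its further restriction to $K_2'$ lies in $C(K_2';A_2)$, as required. Linearity is immediate because restriction and $L$ are both linear. Pointwise continuity follows because $L'$ is a composition of the restriction map $C_p(K_1';A_1)\to C_p(K_1;A_1)$ (which is continuous for the pointwise topologies, being evaluation-coordinate-wise), the continuous map $L$, and the restriction $C_p(K_2;A_2)\to C_p(K_2';A_2)$. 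For the norm bound I would observe that restricting a function never increases its supremum norm, so $\|g\restriction K_1\|\le \|g\|$ and $\|L(g\restriction K_1)\restriction K_2'\|\le \|L(g\restriction K_1)\|\le \|L\|\cdot\|g\restriction K_1\|\le \|g\|$, giving $\|L'\|\le 1$.

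The heart of the argument is the small kernel property, and this is where the extension theorem enters. Given $f\in C(K_2';A_2)$ with $\|f\|=\|f\|$, I first extend $f$ to a function $\tilde f\in C(K_2)$ via Dugundji's map applied to the closed subspace $K_2'$ of the metrizable space $K_2$; since $\|e\|=1$ this extension satisfies $\|\tilde f\|=\|f\|$. The extension need not vanish on $A_2$, but because $A_2\subseteq K_2'$ and $e$ fixes values on $K_2'$, we have $\tilde f\restriction A_2 = f\restriction A_2 = 0$, so in fact $\tilde f\in C(K_2;A_2)$ with $\|\tilde f\|=\|f\|$. Applying the $c$-good witness for $L$ produces $h\in C(K_1;A_1)$ with $L(h)=\tilde f$ and $\|h\|\le c\,\|\tilde f\| = c\,\|f\|$. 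Finally I extend $h$ across $K_1\subseteq K_1'$, again by Dugundji, to obtain $g\in C(K_1')$ with $g\restriction K_1 = h$ and $\|g\|=\|h\|\le c\,\|f\|$; since $A_1\subseteq K_1$ and $h$ vanishes on $A_1$, the extension $g$ also vanishes on $A_1$, so $g\in C(K_1';A_1)$. Then $L'(g) = L(g\restriction K_1)\restriction K_2' = L(h)\restriction K_2' = \tilde f\restriction K_2' = f$, completing the verification.

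The main obstacle I anticipate is bookkeeping about where functions vanish, rather than any deep estimate: one must make sure that each extension used preserves membership in the appropriate $C(\cdot\,;A_i)$ subspace, which relies crucially on the containments $A_1\subseteq K_1$ and $A_2\subseteq K_2'$ so that the functions being extended already vanish on the relevant closed set and the extensions inherit this. The norm estimate is then automatic from $\|e\|=1$ and the fact that restriction is norm-nonincreasing, so no loss in the constant $c$ occurs.
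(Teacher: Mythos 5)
Your proposal is correct and follows essentially the same route as the paper: verify linearity, continuity, and the norm bound directly from the corresponding properties of $L$ and of the restriction maps, then establish the small kernel property by applying Dugundji extenders $C_p(K_2')\to C_p(K_2)$ and $C_p(K_1)\to C_p(K_1')$, noting that the extensions still vanish on $A_2$ and $A_1$ because those sets lie inside the subspaces being extended from. The only cosmetic difference is that you track the vanishing conditions a bit more explicitly than the paper does.
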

\begin{proof} 
As $L$ is linear, $L'$ is clearly linear. Since $\|L\|\leq 1$, we have that $\|L'\|\leq 1$ also.
As $L$ and restriction maps are continuous, the map $L'$ is continuous. 

We verify the `small kernel' property for $L'$. First fix maps $e_1: C_p(K_1)\rightarrow C_p(K_1')$ and $e_2: C_p(K_2')\rightarrow C_p(K_2)$ with the properties guaranteed by Dugundji's Extension Theorem (Lemma~\ref{ext_map}). 
Take $f\in C_p(K_2';A_2)$. Then $e_2(f)\in C_p(K_2; A_2)$ and $\|e_2(f)\|\leq \|f\|$. Since $L$ is $c$-good there exists $g\in C_p(K_1, A_1)$ such that $L(g)=e_2(f)$ and $\|g\|\leq c.\|f\|$. Then we see that $e_1(g)$ is in $C_p(K_1';A_1)$ and $L'(e_1(g))=f$. Also $\|e_1(g)\|\leq \|g\|\leq c.\|e_2(f)\|\leq c.\|f\|$. Hence $L'$ is $c$-good.  \end{proof}

\subsection{Constructing $c$-Good Linear Surjections}

\paragraph{Basic Families Give $c$-Good Maps}

\begin{thm}\label{cg} For any $c>1$, and for each $n$, there is a $c$--good map $L_n : C_p(I) \to C_p(I^n)$.
\end{thm}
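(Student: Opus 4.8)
The plan is to realise $L_n$ as the averaged superposition operator attached to a Kolmogorov-type family of inner functions, and to read off the small-kernel constant from the convergence rate of a successive-approximation scheme. Concretely, I would fix a finite family $\Phi = \{\phi_1, \dots, \phi_r\}$ of continuous maps $\phi_i : I^n \to I$ (a ``basic family'') and set
$$L_n(g) = \frac{1}{r}\sum_{i=1}^r g \circ \phi_i \qquad (g \in C(I)).$$
Linearity and pointwise continuity of $L_n$ are immediate, and since $\|g \circ \phi_i\| \le \|g\|$ for each $i$, the averaging gives $\|L_n(g)\| \le \|g\|$, so $\|L_n\| \le 1$. Thus the whole content of the theorem is the small-kernel property (here $A_1=A_2=\emptyset$), and the number $r$ of inner functions is the parameter I tune to drive the resulting constant down to the prescribed $c$.

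The inner functions I would build by the classical shifted-grid construction. Starting from a fine partition of $I^n$ into small cubes, take $r$ translated copies of the grid, their offsets chosen so that no point of $I^n$ lies within a thin boundary zone in any coordinate for more than $n$ of the $r$ translates; spreading the offsets evenly and shrinking the boundary zones as $r$ grows keeps this count at $\le n$. Call the index $i$ \emph{good at $x$} when $x$ is well inside its cube in the $i$-th grid, so every $x$ is good at all but at most $n$ of the indices. Each $\phi_i$ is then taken (as a sum of strictly monotone one-variable functions, as in Kolmogorov--Sprecher, with rationally independent coefficients) so that it carries the distinct cubes of the $i$-th grid into pairwise disjoint closed subintervals of $I$; this disjointness is exactly what lets a single $g \in C(I)$ be prescribed independently on the images of different cubes.

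For the small-kernel property I would iterate. Given $f \in C(I^n)$ with $\|f\| \le 1$, define $g_1 \in C(I)$ to equal $f$ at the centre of the corresponding cube on each image interval, interpolating elsewhere so that $\|g_1\| \le \|f\|$. At any $x$, the at least $r-n$ good indices each contribute a value within $\omega$ of $f(x)$ (where $\omega$ is the oscillation of $f$ over one cube), while the at most $n$ bad indices contribute terms of modulus $\le \|f\|$; averaging yields $\|f - L_n(g_1)\| \le \theta\|f\|$ with $\theta \le 2n/r + \omega$. Taking the grid fine forces $\omega$ small and taking $r$ large forces $2n/r$ small, so $\theta$ can be made as small as wished; given $c>1$ I fix $r$ and the grid so that $\theta \le 1 - 1/c$. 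Replacing $f$ by $f - L_n(g_1)$ and repeating gives $g_k$ with $\|g_k\| \le \theta^{k-1}\|f\|$ and $f - L_n(g_1 + \dots + g_k) \to 0$ in norm; then $g = \sum_k g_k$ satisfies $L_n(g) = f$ (telescoping) and $\|g\| \le \frac{1}{1-\theta}\|f\| \le c\|f\|$, which is the required bound.

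I expect the construction of the basic family to be the main obstacle: simultaneously arranging the $r$ shifted grids so that the bad indices at each point number at most $n$, and producing inner functions $\phi_i$ that send distinct cubes to disjoint intervals, is the delicate combinatorial--geometric core (essentially Kolmogorov's theorem with a controlled ``good fraction'' $(r-n)/r \to 1$). Once that family is in hand the one-step estimate and the geometric iteration are routine; the only quantitative care needed is to balance the grid fineness against the requirement $\theta \le 1 - 1/c$, and to confirm that the telescoping series converges in the supremum norm so that $L_n(g) = f$ holds exactly.
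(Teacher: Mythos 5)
Your overall strategy is the same as the paper's: an averaged superposition operator built from a Kolmogorov-type family of inner functions, with surjectivity and the small-kernel bound obtained by successive approximation whose one-step contraction factor $\theta$ is driven below $1-1/c$ by increasing the number of inner functions, so that the telescoping series gives $\|g\| \le \frac{1}{1-\theta}\|f\| \le c\|f\|$. The paper takes the inner functions and the contraction estimate ready-made from Braun and Griebel's constructive proof of Kolmogorov's superposition theorem, and it handles the ``single outer function'' issue by first rescaling the $q$-th inner function into the $q$-th of $m+1$ pairwise disjoint closed subintervals of $I$, so that one $g\in C(I)$ encodes $m+1$ independent outer functions; you instead ask the $\phi_i$ themselves to have suitably disjoint images, which can be made to work but puts all the weight on the construction of the basic family.

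The genuine gap is in your one-step estimate. With one fixed grid you claim $\|f - L_n(g_1)\| \le \theta\|f\|$ with $\theta \le 2n/r + \omega$; but $\omega$ is the oscillation of $f$ over a cube of that grid, which enters additively, not as a multiple of $\|f\|$: the honest bound is $\|f - L_n(g_1)\| \le \omega(f) + \frac{2n}{r}\|f\|$, and no single grid satisfies $\omega(h)\le\varepsilon\|h\|$ for all $h$ simultaneously (a function can oscillate through its full range inside one cube). Even if the grid is chosen fine enough for the initial $f$, the residuals $f_k = f - L_n(g_1+\cdots+g_k)$ become increasingly oscillatory relative to their shrinking sup norms, and the contraction $\|f_{k}\|\le\theta\|f_{k-1}\|$ fails at later stages. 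The repair --- and this is the actual content of the Sprecher/Braun--Griebel construction the paper cites --- is to build one finite family $\phi_0,\dots,\phi_m$ that works at \emph{every} scale at once: each $\phi_q$ is a limit of functions adapted to an infinite decreasing sequence of grids, carrying the cubes of every scale to pairwise disjoint intervals, and at step $k$ of the iteration one uses the scale appropriate to $f_{k-1}$. You correctly flag the basic family as the delicate core, but the single-grid version you describe is not strong enough to run the iteration, so the proof as sketched does not close.
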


\begin{proof}
The argument uses results of Sprecher as presented in Braun \& Griebel \cite{BG}.

Fix $n$. Let $m$ be such that $(2n+1)/(m+1) \le 1-1/c = \eta$. Note that $0<\eta <1$. Let $\epsilon = 1/(m-n-1)$.
Then $\frac{m-n-1}{m+1} \epsilon + \frac{2n}{m+1} = \frac{2n+1}{m+1} \le \eta$.

So according to \cite{BG} we get  $\phi_0, \ldots , \phi_m$ from $C(I^n)$ with the following properties: given $f_0 \in C(I^n)$ there are $g_q^k$ in $C(I)$ so that
\begin{itemize}
\item $g_q= \sum_{k=1}^\infty g_q^k$ is in $C(I)$,
\item  $f_0 = \sum_{q=0}^m g_q \circ \phi_q$,
\item $\|g_q^k\| \le \frac{1}{m+1}\|f_{k-1}\|$, where for $k \ge 1$, $f_k = f_0-\sum_{q=0}^m \sum_{j=1}^k g_q^k \circ \phi_q$ (see Lemma~3.8 of \cite{BG}), and
\item $\|f_k\| \le \eta \|f_{k-1}\|$ for $k \ge 1$ (see Theorem~3.3 of \cite{BG}).
\end{itemize}
(Note that Braun \& Griebel use $r$ instead of our $k$, $\Phi_q^r$ for $g_q^k$, and $\xi_q$ for $\phi_q$.)

Pick pairwise disjoint closed non--trivial subintervals $I_0, \ldots , I_m$ inside $I$. Pick a homeomorphism $h_q : I \to I_q$. Let $\hat{\phi}_q = h_q \circ \phi_q$.
Define $L_n =L: C_p(I) \to C_p(I^n)$ by
$ L(g) = \frac{1}{m+1} \sum_{q=0}^m (g \restriction I_q) \circ \hat{\phi}_q$.

We show $L$ is $c$-good in four steps. Clearly,  (1) $L$ is continuous and linear. And, (2)  $\|L\| \le 1$ because:
\[ \|L(g)\| \le \frac{1}{m+1} \sum_{q=0}^m \| (g \restriction I_q)\circ \hat{\phi}_q\| \le \frac{1}{m+1} (m+1) \|g\| = \|g\|.\]

Next we show (3) $L$ is onto. Take any $f$ in $C(I^n)$. Let $f_0=(m+1)f$. Apply the Sprecher result to $f_0$ to get $g_q$'s  as above, so
$f_0 = \sum_{q=0}^m g_q \circ \phi_q = \sum_{q=0}^m (g_q \circ h_q^{-1}) \circ \hat{\phi}_q$.
Hence $f = \frac{1}{m+1} \sum_{q=0}^m (g \restriction I_q) \circ \hat{\phi}_q = L(g)$, where $g \in C(I)$ is $g_q \circ h_q^{-1}$ on $I_q$ and otherwise interpolated linearly over $I$.

It remains to show (4) that $L$ has the `small kernel' property. Take $f$ in $C(I^n)$ and $g$ as in (3).
Note that $\| g\| \le \max \|g_q\|$. We show that the $g_q$ satisfy $\|g_q\| \le c \|f\|$, as required for `small kernel'.
Each $g_q = \sum_{k=1}^\infty g_q^k$ where $\| g_q^k \| \le \frac{1}{m+1} \|f_{k-1}\|$. Since $\|f_{k}\| \le \eta \|f_{k-1}\|$, inductively we see that $\|g_q^k\| \le \frac{1}{m+1} \eta^{k-1} \|f_0\| = \eta^{k-1} \|f\|$, and so
\[ \|g_q\| \le \sum_{k=1}^\infty \|g_q^k\| \le \sum_{k=0}^\infty \eta^k \|f\| = \frac{1}{1-\eta} \|f\| = c \|f\|.\]
\end{proof}

\begin{cor}\label{cptfd_cgood}
For every $c>1$ and compact, metrizable finite dimensional space $K$, there is a $c$-good map of $C_p(I)$ onto $C_p(K)$
\end{cor}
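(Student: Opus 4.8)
The plan is to reduce everything to the case $K = I^m$, which is already handled by Theorem~\ref{cg}, and then to push the resulting map along an embedding and a homeomorphism using Lemmas~\ref{cl_sub} and~\ref{comp_cgood}.

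First I would invoke the classical Menger--N\"obeling embedding theorem: a compact metrizable space $K$ of covering dimension $n$ embeds as a subspace of the cube $I^{2n+1}$, and since $K$ is compact its image is closed. So fix $m = 2n+1$ and a homeomorphism $\psi : K \to K'$ onto a closed subspace $K' \subseteq I^m$. Note $K'$, being closed in $I^m$, is itself compact metrizable.

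Next I would apply Theorem~\ref{cg} with the given constant $c$ to obtain a $c$-good map $L_m : C_p(I) \to C_p(I^m)$, where all the distinguished closed sets are empty. Then Lemma~\ref{cl_sub}, applied with $K_1 = K_1' = I$, $K_2 = I^m$, $K_2' = K'$ and every $A_i = \emptyset$ (so the hypotheses $A_1 \subseteq K_1 \subseteq K_1'$ and $A_2 \subseteq K_2' \subseteq K_2$ hold trivially), yields a $c$-good map $L' : C_p(I) \to C_p(K')$ given by $L'(g) = L_m(g)\restriction K'$. Finally I would transport this across $\psi$: the composition operator $\psi^* : C_p(K') \to C_p(K)$, $\psi^*(f) = f \circ \psi$, is linear and preserves the supremum norm exactly, and for the small-kernel property $h \mapsto h \circ \psi^{-1}$ witnesses surjectivity with constant $1$, so $\psi^*$ is $1$-good. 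By Lemma~\ref{comp_cgood} the composite $\psi^* \circ L' : C_p(I) \to C_p(K)$ is $(c \cdot 1)$-good, that is, $c$-good, as required.

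There is essentially no serious obstacle here: the sole nontrivial input is the external embedding theorem realizing a finite dimensional compact metrizable space as a closed subset of a cube, and everything after that is bookkeeping with the previously established lemmas. The one point that merits a careful check is simply that all spaces entering Lemma~\ref{cl_sub} are compact metrizable (guaranteed by $K'$ being closed in $I^m$) and that the inclusion hypotheses of that lemma are met by the trivial choices made above.
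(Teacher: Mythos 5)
Your proof is correct and follows essentially the same route as the paper: embed $K$ as a closed subspace of $I^{2n+1}$, apply Theorem~\ref{cg}, and restrict the range via Lemma~\ref{cl_sub} (the paper simply identifies $K$ with its image, making your explicit $\psi^*$ step unnecessary but harmless).
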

If $K$ has dimension $n$, then it embeds as a closed subspace in $I^{2n+1}$. Hence we can apply Theorem~\ref{cg} and Lemma~\ref{cl_sub}.

\paragraph{$c$-Good Maps Exist For Large Enough $c$}

\begin{thm}\label{key} Let $K$ be compact metrizable and have finite fd-height, $k'=k+1$. Then there is a $(8^{k+1})$-good map of $C_p(I)$ to $C_p(K)$.

In particular, the closed unit interval $\ell$-dominates $K$.
\end{thm}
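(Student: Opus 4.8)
The plan is to induct on the fd-height $k' = k+1$, using Corollary~\ref{cptfd_cgood} for the base case and Lemma~\ref{comp_cgood}, Lemma~\ref{cl_sub}, and Dugundji's Extension Theorem (Lemma~\ref{ext_map}) to drive the inductive step. When $k = 0$ the space $K = K^{[0]}$ is finite dimensional, so Corollary~\ref{cptfd_cgood} directly supplies a $c$-good map for every $c > 1$, in particular an $8 = 8^{0+1}$-good one. For the inductive step I would write $K = A \sqcup U$ where $A = K^{[1]}$ is closed and $U = I(K)$ is open. Two facts make this decomposition usable: (i) $A$ has fd-height exactly $k$, since $(K^{[1]})^{[\alpha]} = K^{[1+\alpha]}$ forces $A^{[k-1]} = K^{[k]} \neq \emptyset$ while $A^{[k]} = K^{[k+1]} = \emptyset$, so the inductive hypothesis yields an $8^k$-good map $L_A : C_p(I) \to C_p(A)$; and (ii) every compact subset of $U$ is finite dimensional, because such a set is covered by finitely many of the finite-dimensional open sets whose union is $I(K)$, and a finite union of finite-dimensional sets is finite dimensional.

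Next I would reduce an arbitrary $f \in C(K)$ to two pieces. Using a Dugundji extension $e : C_p(A) \to C_p(K)$, the difference $h := f - e(f \restriction A)$ lies in $C(K;A)$ and satisfies $\|h\| \le 2\|f\|$, while $\|f \restriction A\| \le \|f\|$. The $A$-part is handled by $L_A$ composed with $e$. For the $U$-part I would build a small-kernel map for $C(K;A)$ by exhausting $U$ with compact finite-dimensional shells: fix $\rho \in C(K,[0,1])$ with $\rho^{-1}(0) = A$, let $S_n$ be the compact (hence finite-dimensional, by (ii)) region where $\rho \in [2^{-n-1}, 2^{-n+1}]$, and take a partition of unity $(\lambda_n)_n$ subordinate to the $S_n$ with only neighbouring shells overlapping. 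Each shell receives, via Corollary~\ref{cptfd_cgood}, a near-isometric ($c_0$-good with $c_0$ close to $1$) map $M_n : C_p(I) \to C_p(S_n)$, and these are glued using pairwise disjoint subintervals $(J_n)_n$ of $I$. The crucial gain from disjoint intervals is that the norm of a glued preimage is the \emph{supremum} $\sup_n \|g \restriction J_n\|$ rather than a sum, so no accumulation occurs across the infinitely many shells; each piece is bounded by $c_0\|h\|$, yielding a preimage of norm at most a fixed multiple of $\|h\| \le 2\|f\|$.

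Finally I would assemble the full map $L : C_p(I) \to C_p(K)$ from the $A$-part and the $U$-part, separating their inputs onto disjoint subintervals of $I$, so that the target norm budget combines as a product of a fixed factor with $8^k$, matching $8^{k+1}$ and consistent with the multiplicative behaviour of Lemma~\ref{comp_cgood}. Surjectivity with small kernel then follows by reconstructing $f = e(f \restriction A) + h$ from the two pieces. I expect the main obstacle to be arranging everything so that $\|L\| \le 1$ holds simultaneously with the small-kernel bound: the reconstruction must be organised as a genuine partition-of-unity (convex) combination over $K$, blending the extension $e$ and the shell bumps $\lambda_n$ into coefficients summing to at most $1$ at each point, while the averaging built into the shell maps $M_n$ and the disjoint-interval gluing keep the preimage norms under the $8^{k+1}$ ceiling. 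Verifying continuity of the reconstructed function at points of $A$, where only shells $S_n$ with large $n$ are active and $h$ is correspondingly small, is the delicate limiting step that the finite-dimensional shell estimates are designed to control.
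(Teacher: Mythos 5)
Your overall strategy is sound and the constant-tracking is workable, but your induction runs in the opposite direction to the paper's. The paper peels off the \emph{deepest} level: it fixes the finite-dimensional core $K^{[k]}$, handles it with Corollary~\ref{cptfd_cgood}, and covers the open complement $U=K\setminus K^{[k]}$ by a countable locally finite family with compact closures $C_n\subseteq U$; each $C_n$ has fd-height $\le k$, so the inductive hypothesis is applied (infinitely often, always with the same constant $8^k$) to the pieces $C_n$, which are then glued via a partition of unity against disjoint intervals $I_n=[(3/2)2^{-n},2^{-n+1}]$ accumulating at $0$. You peel off the \emph{first} level: $A=K^{[1]}$ has fd-height exactly $k$ (your identity $(K^{[1]})^{[\alpha]}=K^{[1+\alpha]}$ is correct since $I(\cdot)$ is intrinsic), so the inductive hypothesis is applied once, to $A$, while $I(K)$ is exhausted by compact finite-dimensional shells each handled by Corollary~\ref{cptfd_cgood}. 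Both decompositions work and require the same gluing machinery (Dugundji extenders, a partition of unity, disjoint subintervals so that preimage norms combine as a supremum, and the small-kernel property to force $\|g_n\|\to 0$); yours has the mild advantage that the infinitely many glued maps are near-isometries rather than $8^k$-good, but neither version avoids the multiplicative loss per level that blocks a transfinite extension.

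One point needs tightening: you attribute the ``delicate limiting step'' to the smallness of $h$ near $A$, but that only covers surjectivity. For the glued map $L_U$ to be \emph{well defined} --- that is, for $L_U(g)$ to be continuous at points of $A$ for an \emph{arbitrary} input $g$ --- you need $\|g\restriction J_n\|\to 0$, which fails for a constant input unless the domain is restricted to functions vanishing at the accumulation point of the $J_n$'s. The paper does exactly this: it builds $L_0:C_p(I;\{0\})\to C_p(K;K^{[k]})$ and precomposes with the $2$-good map $g\mapsto \frac{1}{2}(g-c_{g(0)})$. You would also need the separate (routine, but not free) check that $L_U$ is continuous between the pointwise topologies, which uses the local finiteness of the shell cover.
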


\begin{proof} We proceed by induction on the fd-height, $k'$.

\paragraph{Base Case: $K^{[0+1]}= \emptyset$.} Then $K$ is compact metrizable and finite dimensional, and so has (Corollary~\ref{cptfd_cgood})  an  $8$-good map.

\paragraph{Inductive Step: $K^{[k+1]}= \emptyset$.}
 Then $K^{[k]}$ is compact metrizable and finite dimensional. Assume the claim is true for all compact metrizable spaces with fd-height $\le k$.

Fix an extender $e$ of $K^{[k]}$ in $K$. The map $H: C_p(K; K^{[k]}) \times C_p(K^{[k]}) \to C_p(K)$ defined by $H(g,h) = g+ e(h)$, is an isomorphism. 
Fix a $2$-good map $L_2 : C_p([2/3,1]) \to C_p(K^{[k]})$. We show there is a $2.8^{k}$-good map $L_1 : C_p([0,1/3]) \to C_p(K; K^{[k]})$, for then $L : C_p(I) \to C_p(K)$ is $8^{k+1}$-good where $L(g) = \frac{1}{2} \cdot H(L_1(g \restriction [0,1/3]), L_2 ( g \restriction [2/3,1]))$. 
To see this, first observe that $L$ is certainly continuous, linear and has norm $1$. We show it has the `small kernel' property. 
Take any $f$ in $C_p(K)$. Let $\hat{f}=2f$. Let $f_1=\hat{f} - e(\hat{f} \restriction K^{[k]})$ and $f_2=\hat{f} \restriction K^{[k]}$. Note that $\|f_1\| \le 2\|\hat{f}\|$ and $\|f_2\| \le \|\hat{f}\|$. 
As $L_1$ is $2.8^k$-good there is a $g_1$ such that $L_1(g_1)=f_1$ and $\|g_1\| \le 2.8^k \|f_1\|$. 
As $L_2$ is $2$-good there is a $g_2$ such that $L_2(g_2)=f_2$ and $\|g_2\| \le 2 \|f_1\|$. Let $g$ in $C_p(I)$ be the continuous function which is $g_1$ on $[0,1/3]$, $g_2$ on $[2/3,1]$, and linearly interpolates between $g_1(1/3)$ and $g_2(2/3)$ on $[1/3,2/3]$. Then $L(g)=(1/2)(f_1+e(f_1)) = (1/2) (\hat{f}-e(\hat{f} \restriction K^{[k]})+ e(\hat{f}\restriction K^{[k]})) =f$. And, $\|g\|=\max (\|g_1\|,\|g_2\|) \le \max (2.8^k \|f_1\|,2 \|f_2\|) \le 2.8^{k}. 2\|\hat{f}\| \le 2.8^k.2.2 \|f\| = 8^{k+1} \|f\|$.

The map $g \mapsto (1/2)(g-c_{g(0)})$, where $c_t$ is the constant $t$ function, from $C_p(I)$ to $C_p(I;\{0\})$ is clearly $2$-good. So it suffices,  by Lemma~\ref{comp_cgood}, to find  an $8^k$-good map $L_0 : C_p(I; \{0\}) \to C_p(K; K^{[k]})$.

Let $U = K \setminus K^{[k]}$. Find a countable locally finite (in $U$) open cover of $U$, $\mathcal{U} = \{U_n \}_n$, where each closed set $C_n = \overline{U_n}$ is contained in $U$, and a partition of unity $\{p_n\}_n$, (so each $p_n$ is continuous, maps into $[0,1]$ and $\sum_n p_n =1$), such that the support of $p_n$ is contained in $U_n$. We can suppose each $p_n$ is in $C_p(K; K^{[k]})$ (and not just $C_p(U)$).

Fix for the moment $n$. Then $C_n$ is compact metrizable and has fd-height $\le k$. So by inductive hypothesis there is a $8^{k}$-good map $l_n' : C_p(I_n) \to C_p(C_n)$ where $I_n = [(3/2)(1/2^n),1/2^{n-1}]$. Fix an extender $e_n$ for $C_n$ in $K$. Let $l_n = e_n \circ l_n' : C_p(I_n) \to C_p(K)$. Note that if $g_n$ is in $C_p(I_n)$ then $l_n(g_n).p_n$ is equal to $l_n'(g_n).p_n$ on $C_n$, and is zero outside $C_n$.

Now we can define $L_0' : C_p( \bigoplus_n I_n \cup \{0\}; \{0\}) \to C_p(K; K^{[k]})$ by $L_0'(g) = \sum_n l_n (g_n). p_n$ where $g_n = g \restriction I_n$. Define $L_0$  by $L_0(g) = L_0'(g \restriction \left( \bigoplus_n I_n \cup \{0\} \right))$.
Once we have  checked that $L_0'$ is well-defined and $8^k$-good, then $L_0$ is as required by Lemma~\ref{cl_sub}. 

Certainly $L_0'(g)$ exists, maps $K$ into $\R$ and is zero on $K^{[k]}$. Also $L_0'(g)$ is continuous: if $g \in C_p(\bigoplus_n I_n \cup \{0\};\{0\})$ then $\|g_n\| \to 0$, so $\|l_n(g_n)\| \to 0$, and -- remembering that the $p_n$'s form a partition of unity -- continuity of $L_0'(g)$ follows.

Since the $l_n$'s are linear, so is $L_0'$.
Next we check $L_0'$ is continuous. It suffices to check continuity at $\mathbf{0}$. Take subbasic $B_\epsilon (\mathbf{0}, \{x_0\}) = \{ f \in C_p(K;K^{[k]}) : |f(x_0)| < \epsilon\}$. If $x_0$ is in $K^{[k]}$ there is nothing to check. So suppose $x_0$ is in $U$. Then there is an open $V \ni x_0$ such that $V$ meets $U_{n_1}, \ldots , U_{n_r}$ from $\mathcal{U}$, and no others. Note $x_0 \notin C_n = \overline{U_n}$ if $n \ne n_i$ for some $i$.
For $i=1, \ldots , r$, as $l_{n_i}' : C_p(I_{n_i}) \to C_p(C_{n_i})$ is  $8^k$-good, there is $\delta_i >0$ and finite $F_i$ such that $l_{n_i} ( B_{\delta_i} (\mathbf{0}, F_i)) \subseteq B_\epsilon (\mathbf{0}, \{x_0\})$. Then $\delta = \min_i \delta_i$ and $F=\bigcup_i F_i$ work for $L_0'$. To see this, take $g\in B_{\delta}(\mathbf{0}, F)\cap C_p( \bigoplus_n I_n \cup \{0\}; \{0\})$ and let $g_n=g\restriction I_n$. We see that $g_{n_i}\in B_{\delta_i}(\mathbf{0}, F_i)$ for each $i=1, \ldots, r$, hence $|\ell_{n_i}(g_{n_i})(x_0)|<\epsilon$ for each $i=1, \ldots, r$. Also, since $x_0\notin C_n$ if $n\ne n_i$ for some $i$, $p_{n}(x_0)=0$ if $n\ne n_i$ for some $i$. Therefore, we have that $|L_0'(g)(x_0)|=|\sum_n \ell_n(g_n)(x_0).p_n(x_0)|\leq \sum_{i=1}^{r}|\ell_{n_i}(g_{n_i})(x_0).p_{n_i}(x_0)| <\epsilon$.

Since each $l_n$ has norm $\le 1$, so does $L_0'$. 
Finally, take any $f$ in $C_p(K; K^{[k]})$. For each $n$ let $f_n= f \restriction C_n$. Since $f \restriction K^{[k]}$ is identically zero, $\|f_n\| \to 0$. As $l_n'$ is $8^k$-good, we can pick $g_n$ in $C_p(I_n)$ such that $l_n'(g_n)=f_n$ and $\| g_n\| \le (8^k) \|f_n\|$.
Let $g$ be the function which coincides with $g_n$ on $I_n$, is zero at zero, and otherwise linearly interpolate between the endpoints of the $I_n$'s. Since $\|g_n\| \le 8^k \|f_n\|$ and $\|f_n\| \to 0$ the  function $g$ is continuous. (This is the critical use of the `small kernel' property. Without it the $g_n$'s could have  norms not converging to zero, and $g$ would not be continuous.)  Now $L_0'(g)=f$ (here we use the fact that the $p_n$'s are a partition of unity, $C_n$ is contained in the support of $p_n$ and $l_n(g_n).p_n$ coincides with $l_n'(g_n).p_n$ on $C_n$) and $\| g \| \le (8^k) \|f\|$.
\end{proof}

\section{Constructing Spaces $\ell$--Dominated by $\R$}\label{Rldom}

Since every compact metrizable space of finite fd-height is $\ell$-dominated by $I$, part~(a) of Theorem~\ref{main2} follows immediately from the next result.

\begin{thm}\label{key2}
The reals, $\R$, $\ell$--dominates a Tychonoff space $X$
if  $X$ is submetrizable and has a $k_\omega$ sequence, $(K_n)_n$ where each $K_n$ is $\ell$-dominated by $I$.
\end{thm}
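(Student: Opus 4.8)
The plan is to build the surjection $C_p(\R)\to C_p(X)$ by spreading the compact pieces $K_n$ out along disjoint intervals in $\R$ and gluing them together with extenders. Non-compactness of $\R$ is the key asset: unlike the interval case of Theorem~\ref{key}, I can route the pieces through a closed copy of a topological sum inside $\R$, so no norm control (no $c$-good property) is needed, and the bare $\ell$-dominance hypothesis on each $K_n$ suffices.

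First I would fix the scaffolding. Choose pairwise disjoint closed intervals $J_n\subseteq\R$ tending to infinity, so that $D=\bigcup_n J_n$ is closed in $\R$ and is the topological sum $\bigoplus_n J_n$; then $C_p(D)=\prod_n C_p(J_n)$, and by Lemma~\ref{ext_map} the restriction $\rho:C_p(\R)\to C_p(D)$ has a continuous linear right inverse, so in particular is a continuous linear surjection. Set $K_0=\emptyset$. Using a Dugundji extender between the compact metrizable spaces $K_{n-1}\subseteq K_n$ (Lemma~\ref{ext_map}), the projection $h\mapsto h-\tilde e_{n-1}(h\restriction K_{n-1})$ carries $C_p(K_n)$ onto the relative space $C_p(K_n;K_{n-1})$; composing with the given $\ell$-dominance $C_p(I)\to C_p(K_n)$ yields continuous linear surjections $\Lambda_n:C_p(J_n)\to C_p(K_n;K_{n-1})$. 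The role of submetrizability is to supply \emph{global} extenders: in the coarser metrizable topology each $K_n$ is compact, hence closed, so Dugundji gives $e_n:C_p(K_n)\to C_p(X)$, since a function continuous for the coarser metric topology is continuous for the original one.

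Then I would define $L(g)=\sum_n e_n\big(\Lambda_n(g\restriction J_n)\big)$. The crucial point is that each summand lies in $C_p(K_n;K_{n-1})$, so for $x\in K_m$ every term with $n>m$ vanishes at $x$: indeed $x\in K_{n-1}$, and $e_n$ agrees with its argument on $K_n\supseteq K_{n-1}$. Thus on $K_m$ the sum collapses to the finite sum $\sum_{n\le m}e_n(\Lambda_n(g\restriction J_n))$. This makes $L(g)$ well-defined, continuous on each $K_m$, and hence continuous on $X$ by the $k_\omega$ property, and it makes $L$ itself continuous and linear, since each evaluation $g\mapsto L(g)(x)$ is a finite sum of continuous functionals. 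For surjectivity, given $f\in C_p(X)$ I would define $h_m\in C_p(K_m;K_{m-1})$ recursively by $h_m=\big(f-\sum_{n<m}e_n(h_n)\big)\restriction K_m$; an induction shows $h_m\restriction K_{m-1}=0$ and $\sum_{n\le m}e_n(h_n)\restriction K_m=f\restriction K_m$. Pulling each $h_m$ back through the surjection $\Lambda_m$, assembling the preimages into $g\in C_p(D)$, and lifting through $\rho$ produces $g\in C_p(\R)$ with $L(g)=f$.

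I expect the main obstacle to be verifying that the infinite sum genuinely defines a continuous function on $X$ and a continuous linear map; this is exactly where the relative-space (vanishing) structure, together with the $k_\omega$ property, does the work, by forcing the sum to be finite on each $K_m$ regardless of how the norms $\|\Lambda_n(g\restriction J_n)\|$ grow. The secondary delicate point is the availability of extenders from each $K_n$ into the possibly non-metrizable $X$, which is precisely what the submetrizability hypothesis is there to provide.
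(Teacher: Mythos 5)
Your proof is correct and follows essentially the same route as the paper's: both split each $C_p(K_n)$ as $C_p(K_n;K_{n-1})\times C_p(K_{n-1})$ via Dugundji extenders, route the relative factors through a discrete family of closed intervals in $\R$, and establish surjectivity by the same recursive choice of preimages of $f\restriction K_m$ minus the already-extended part. The only organizational difference is that the paper works in $\varprojlim C_p(K_n)$ and extends step-by-step between consecutive metrizable $K_n$'s, whereas you sum globally in $C_p(X)$ using extenders $C_p(K_n)\to C_p(X)$ obtained by applying Dugundji in the coarser metrizable topology --- a valid additional use of submetrizability that the inverse-limit formulation sidesteps.
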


\begin{proof} 
Suppose $X$ is submetrizable and has a $k_\omega$ sequence, $(K_n)_n$ where each $K_n$ is $\ell$-dominated by $I$.  Hence, $C_p(X)$ is linearly homeomorphic to $\ilim C_p(K_{n})$, where the $K_n$'s are directed by inclusion, and $f \in C_p(X)$ corresponds to $(f\restriction K_n)_n$ in the inverse limit. Let $\pi_n$ be the projection map from $\ilim C_p(K_{m})$ to $C_p(K_n)$.

Fix $\{I_n\}_n$  a discrete family of  closed non--trivial intervals in
$\R$. Let $\rho_n : C_p(\R) \to C_p(I_n)$ be the restriction map, $\rho_n (g)=g \restriction I_n$.  This is, of course, continuous linear and onto. By Lemma~\ref{ext_map}, pick $e_n^{n+1}$  a continuous,
linear extender from $C_p(K_n)$ to $C_p(K_{n+1})$. 

By hypothesis (all the $K_n$'s are $\ell$-dominated by $I$), there is a continuous linear
onto mapping $L_1:C_p(I_1)\rightarrow C_p(K_1)$. Take any $n \ge 2$.
Recall that (using the extender $e_{n-1}^n$) $C_p(K_n)$ is linearly homeomorphic to $C_p(K_n;K_{n-1})\times C_p(K_{n-1})$. Hence the composition of the hypothesized continuous, linear surjection of $C_p(I_n)$ to $C_p(K_n)$, with the projection onto the first factor of the above product, is a continuous, linear and onto mapping
$L_n:C_{p}(I_n)\rightarrow  C_p(K_n;K_{n-1})$.

Define $L:C_p(\R)\rightarrow \ilim C_p(K_n)$ as
follows. We need to specify $\pi_n \circ L$ from $C_p(\R)$ to  $C_p(K_n)$ for each $n$. Recursively set  $\pi_1 \circ L=L_1 \circ \rho_1$ and assuming that  $\pi_n \circ L$ is
defined, set $\pi_{n+1} \circ L=e_n^{n+1} \circ (\pi_n \circ L)+L_{n+1} \circ \rho_{n+1}$.
Note that for every $g$ in $C_p(\R)$ and every $n$,  $(\pi_n \circ L)(g)$ is a continuous map on 
$K_{n}$, and for $n \ge 2$,  extends $(\pi_{n-1} \circ L)(g)$. Hence $L$ is well defined.

Now we need to prove that $L$ is continuous, linear and onto. For continuity and linearity, it is sufficient to verify that each $\pi_n\circ L: C_p(\R)\rightarrow
C_p(K_n)$ is continuous and linear.
This is straightforward by induction. Since $L_1$ and $\rho_1$ are continuous and linear so is $\pi_1\circ L=L_1 \circ \rho_1$.
Suppose inductively that $\pi_n\circ L$ is continuous and linear.
By definition, $\pi_{n+1}\circ L=e_n^{n+1} \circ (\pi_n \circ L)+L_{n+1} \circ \rho_{n+1}$ is a sum of compositions of continuous and linear maps, and so has the same properties.

It remains to show that $L$ is onto.  Take any
$(f_n)_n\in \ilim C_p(K_{n})$ (so each $f_n$ is continuous on $K_n$ and $f_{n+1}$ extends $f_n$). We inductively find a sequence $(g_n)_n$ where $g_n \in C_p(I_n)$ such that any (and there are many such) $g$ in $C(\R)$ with $\rho_n(g)=g_n$ for all $n$, satisfies $(\pi_n \circ L)(g) = f_n$ for all $n$, in other words $L(g)=(f_n)_n$.

To start the induction, as $L_1$ is onto, we can pick $g_1$ in $C_p(I_1)$ so that $L_1(g_1)=f_1$. Note that for any $g$ in $C_p(\R)$ such that $\rho_1(g)=g_1$ we have $(\pi_1 \circ L)(g)=f_1$.

Inductively suppose we have $g_1, \ldots , g_n$ where $g_i \in C_p(I_i)$ are such that any $g$ in $C_p(\R)$ with $\rho_i(g)=g_i$ for $i=1, \ldots ,n$ satisfies $(\pi_n \circ L)(g) = f_n$. As $L_{n+1}$ is onto there is a $g_{n+1}$ in $C_p(I_{n+1})$ such that $L_{n+1}(g_{n+1}) = f_{n+1}-e_n^{n+1}(f_{n+1} \restriction K_n)$. Now if $g \in C_p(\R)$ is such that $\rho_i(g)=g_i$ for $i=1, \ldots , n+1$ then $(\pi_{n+1} \circ L)(g) = e_n^{n+1}( (\pi_n \circ L) (g)) +L_{n+1}(g \restriction I_{n+1}) = e_n^{n+1}(f_n)+L_{n+1}(g_{n+1}) = f_{n+1}$ --- as required.
\end{proof}

\section{Properties Preserved by $\ell$-Domination}\label{Pldom}

The main aim here is to show that a space $\ell$-dominated by $\R$ is
$k_\omega$, submetrizable and   strongly countable dimensional.

In \cite{Baars} Baars showed that if  a ($\sigma$-) compact
space $X$ $\ell$-dominates $Y$, then $Y$ is ($\sigma$-)
compact if and only if $Y$ is a $\mu$-space (closed bounded
subsets are compact). We establish similar results for hemicompact
and for $k_\omega$ spaces.

\begin{lem}\label{ld_imp} Suppose $X$ $\ell$-dominates  $Y$ via $L$. Then
\begin{itemize}
\item[(i)] if $X$ is hemicompact then $Y$ is hemicompact if and only if $Y$ is $\mu$,
\item[(ii)] if $X$ is $k_\omega$ then $Y$ is $k_\omega$ if and only if $Y$ is $\mu$.
\end{itemize}
\end{lem}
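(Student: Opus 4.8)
The plan is to prove each biconditional by separating the routine implication ``$Y$ hemicompact (resp.\ $k_\omega$) $\Rightarrow$ $Y$ is $\mu$'', which uses neither $X$ nor $L$, from the substantive implication ``$Y$ is $\mu$ $\Rightarrow$ $Y$ hemicompact (resp.\ $k_\omega$)'', where the domination enters. The routine implication can be handled uniformly for (i) and (ii): both hemicompact and $k_\omega$ spaces are $\sigma$-compact (each point is a compact set, hence lies in one of the cofinal compacta, so their union is the whole space). A $\sigma$-compact space is \Lind{} and therefore realcompact, and in a realcompact space every closed functionally bounded set is compact (the closure of a functionally bounded set in the Hewitt realcompactification is compact). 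Hence $Y$ is a $\mu$-space. Note the same argument, applied to $X$, shows that the hemicompact (resp.\ $k_\omega$) space $X$ is itself a $\mu$-space, a fact I will use below.

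For the substantive implication I would set up the standard support apparatus (cf.\ Baars \cite{Baars}). The dual $C_p(Z)^*$ consists of the finitely supported signed measures, i.e.\ finite linear combinations of point evaluations $\delta_z$. Since $L$ is onto, the adjoint $L^* : C_p(Y)^* \to C_p(X)^*$ is an injective continuous linear map; write $L^*(\delta_y)=\nu_y=\sum_{i=1}^{k_y}\lambda_i^y\,\delta_{x_i^y}$ with all $\lambda_i^y\ne 0$, set $\operatorname{supp}(y)=\{x_1^y,\dots,x_{k_y}^y\}\in\fsp$, and record the defining relation $L(g)(y)=\nu_y(g)=\sum_i \lambda_i^y\, g(x_i^y)$. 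Two facts drive everything: first, the support map $y\mapsto\operatorname{supp}(y)$ is lower semicontinuous, so $Y_n=\{y:\operatorname{supp}(y)\subseteq C_n\}$ is closed for any closed $C_n$; and second, the \emph{engine}: for every compact $K\subseteq Y$ the set $\operatorname{supp}(K)=\bigcup_{y\in K}\operatorname{supp}(y)$ is functionally bounded in $X$ \emph{and} $\sup_{y\in K}\|\nu_y\|<\infty$ (total variation). Establishing this engine is the main obstacle: lower semicontinuity is easy, but controlling the supports together with the masses over a compact set is the technical heart of support theory, since a single bound on the weighted sums $\nu_y(g)$ does not obviously bound the individual values $g(x_i^y)$ or the total variations $\|\nu_y\|$.

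Granting the engine, I would assemble (i) as follows. Fix an increasing cofinal sequence of compacta $(C_n)_n$ in $X$. For compact $K\subseteq Y$, the engine makes $\operatorname{supp}(K)$ functionally bounded, hence (as $X$ is $\mu$) $\overline{\operatorname{supp}(K)}$ is compact, so $\operatorname{supp}(K)\subseteq C_n$ for some $n$, giving $K\subseteq Y_n$. Now for $n,m\in\N$ put $Y_{n,m}=\{y:\operatorname{supp}(y)\subseteq C_n,\ \|\nu_y\|\le m\}$ and $\widehat Y_{n,m}=\overline{Y_{n,m}}$. Each $Y_{n,m}$ is functionally bounded, because writing any $f\in C(Y)$ as $f=L(g)$ (here onto-ness is used) gives $|f(y)|\le\|\nu_y\|\cdot\max_{C_n}|g|\le m\max_{C_n}|g|$ for $y\in Y_{n,m}$; the closure of a functionally bounded set is functionally bounded, so $\widehat Y_{n,m}$ is a closed functionally bounded set in the $\mu$-space $Y$, hence compact. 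By the engine every compact $K\subseteq Y$ satisfies $K\subseteq Y_{n,m}\subseteq\widehat Y_{n,m}$ for suitable $n,m$. Thus $\{\widehat Y_{n,m}\}_{n,m}$ is a countable family of compacta cofinal in the compact subsets of $Y$, so $Y$ is hemicompact.

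For (ii) the same construction applies, since a $k_\omega$ space $X$ is in particular hemicompact; rearranging $\{\widehat Y_{n,m}\}$ into an increasing sequence $(\widehat Y_j)_j$ yields a cofinal sequence of compacta, the candidate $k_\omega$ sequence. Because a hemicompact space is $k_\omega$ exactly when it is a $k$-space, it remains to verify that $Y$ carries the weak topology determined by $(\widehat Y_j)_j$, i.e.\ that $A\subseteq Y$ is closed whenever $A\cap\widehat Y_j$ is closed for every $j$. I would deduce this from the inductive-limit (weak-topology) property of the $k_\omega$ sequence $(C_n)_n$ of $X$, transported through $L$ and the support map. This topological gluing is the second obstacle, secondary to the engine: the engine is the genuinely hard, and reusable, input, while the $k$-space transfer is the only extra ingredient separating (ii) from (i).
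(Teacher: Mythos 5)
Your treatment of part (i) is sound and, once your ``engine'' is granted, is essentially the paper's argument in a mild repackaging: the paper takes as black boxes from the literature exactly the two facts you isolate (Arhangelskii: $\mathop{supp}$ of a bounded set is bounded; Baars: $\mathop{supp}^{-1}$ of a closed bounded set is closed and bounded in $Y$) and then sets $K'_n = \mathop{supp}^{-1}K_n$ directly, so your sets $Y_{n,m}$ and the total-variation bound are an equivalent, slightly more hands-on route to the same conclusion. Deferring the engine is defensible since it is standard support-map machinery with citable sources, but note that the uniform bound $\sup_{y\in K}\|\nu_y\|<\infty$ over compact $K$ is itself one of the nontrivial facts of that machinery and would need a reference or proof, not just the lower semicontinuity you call easy.

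The genuine gap is in part (ii). You correctly reduce to showing that $Y$ is a $k$-space, but the sentence about deducing this ``from the inductive-limit property of $(C_n)_n$, transported through $L$ and the support map'' is not an argument, and there is no evident way to make it one: there is no continuous map from $X$ (or from the $C_n$) onto $Y$ along which to push the weak topology, the support map is only lower semicontinuous, and $L$ points the wrong way at the level of spaces. This transfer is not a secondary obstacle --- it is the entire content of (ii) beyond (i). The paper's proof goes through functional analysis instead: since $Y$ is hemicompact and $\mu$, $C_k(Y)$ is metrizable and barrelled (Nachbin--Shirota); $C_k(X)$ is completely metrizable because $X$ is $k_\omega$ (Warner's theorem); an almost-continuity argument using barrelledness plus a closed graph theorem shows that the quotient topology induced by $L$ on $C(Y)$ coincides with the compact-open topology, so $C_k(Y)$ is completely metrizable and hence $Y$ is $k_\omega$, again by Warner. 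A more topological route would essentially require embedding $Y$ as a closed subspace of the free locally convex space on $X$ with the $C_k$-dual topology, as in the proof of Lemma~\ref{ld_kwsbscd}, and that in turn rests on the same open-mapping/closed-graph step; so some such functional-analytic input appears unavoidable, and your sketch is missing it.
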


\begin{proof}
First note that hemicompact spaces are normal, and so $\mu$. Suppose now that $X$ is hemicompact, and $\ell$-dominates a $\mu$ space $Y$ by $L$. The linear map $L$ gives rise to the support map $y \mapsto \mathop{supp} (y)$ where $\mathop{supp} (y)$ is a finite subset of $X$ whose most relevant properties here are that (1) if $A$ is a bounded subset of $Y$ then $\mathop{supp} (A) = \bigcup \{ \mathop{supp} (y) : y \in A\}$   is a bounded subset of $X$ (Arhangelskii \cite{Ar}) and (2) if $B$ is a closed bounded subset of $X$ then $\mathop{supp}^{-1} B = \{ y : \mathop{supp} (y) \subseteq B\}$ is a closed bounded subset of $Y$ (Baars \cite{Baars}).

Let $K_n$ be be a countable cofinal sequence of compact sets in $X$. Define $K'_n=\mathop{supp}^{-1} K_n$. As $Y$ is $\mu$ these $K'_n$'s are compact. If $K'$ is compact in $Y$, then there is an $N$ so that the compact set $\overline{\mathop{supp} (K')}$ is contained in $K_N$. It is easy to check that $K' \subseteq K'_N$ -- so the $K'_n$'s are cofinal in the compact subsets of $Y$.

Finally suppose $X$ is $k_\omega$ and $Y$ is $\mu$. Since $X$ is hemicompact, so is $Y$, Thus $C_k(Y)$ is metrizable, and $Y$ is Dieudonne complete, so  $L$ is a continuous linear map of $C_k(X)$ onto $C_k(Y)$. Since $Y$ is $\mu$, $C_k(Y)$ is barrelled (the Nachbin-Shirota theorem \cite{Nach, Shir}). Recall \cite{Warner} that $C_k(Z)$ is completely metrizable if and only if $Z$ is $k_\omega$. Thus $C_k(X)$ is a completely metrizable locally convex topological vector space, and it suffices to show that $C_k(Y)$ is also completely metrizable.

Let $\tau_k$ be the compact-open topology on $C(Y)$. Let $\tau_L$ be the quotient topology on $C(Y)$ induced by $L$. Then $(C(Y),\tau_L)$ is a completely metrizable locally convex space, because $C_k(X)$ has these same properties. We show that the identity map $i:(C(Y),\tau_k) \to (C(Y),\tau_L)$ is continuous, so the two topologies coincide. First note that $i$ is almost-continuous: take any $\tau_L$-neighborhood $U$ of $\mathbf{0}$, which by local convexity we can assume is a $\tau_L$-closed barrel, then the $\tau_k$-closure of $U$ is a $\tau_k$-closed barrel, and hence (as $C_k(Y)$ is barrelled) contains a $\tau_k$-neighborhood $V$ of $\mathbf{0}$ such that  $V \subseteq \overline{U}^{\tau_k} = \overline{i^{-1} U}^{\tau_k}$. Now $i$ is linear, almost-continuous and has a closed graph, with range a completely metrizable linear space, and so by a standard Closed Mapping Theorem is continuous (see 14.3.4 in \cite{NariciBeck} for example).
\end{proof}

\begin{lem}\label{ld_kwsbscd}
If $X$ is a $k_\omega$, submetrizable space and is strongly countable dimensional, and $X$ $\ell$-dominates a space $Y$ via $L$, then $Y$ is also a $k_\omega$, strongly countable dimensional,  submetrizable space.
\end{lem}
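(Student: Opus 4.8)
The plan is to prove Lemma~\ref{ld_kwsbscd} by invoking Lemma~\ref{ld_imp} to transfer the $k_\omega$ property, using submetrizability to guarantee the $\mu$ hypothesis, and then handling strong countable dimensionality separately using the support map and the preservation of covering dimension under $\ell$-equivalence/$\ell$-dominance. First I would observe that any submetrizable space is Dieudonn\'e complete (it embeds in a metrizable space via the coarser topology, or more directly it is realcompact-adjacent), hence in particular is a $\mu$-space: its closed bounded subsets are compact, because a bounded subset maps to a bounded, hence relatively compact, subset of the metric space and the original topology agreeing with the metric topology on the (compact-in-the-coarser-topology) closure forces compactness. With $Y$ known to be $\mu$, part~(ii) of Lemma~\ref{ld_imp} applies directly and gives that $Y$ is $k_\omega$.

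Next I would establish submetrizability of $Y$. I expect this to follow from the $k_\omega$ structure together with metrizability of the pieces: having just shown $Y$ is $k_\omega$, fix a $k_\omega$ sequence $(K_n')_n$ for $Y$. Each $K_n'$ is compact. The standard fact is that a continuous linear surjection $L : C_p(X) \to C_p(Y)$ (equivalently the support map) carries metrizability-type information from $X$ to $Y$; concretely, since $X$ is submetrizable and the support map sends each compact $K_n'$ into a compact, hence metrizable, subset of $X$, one pulls back a coarser metric on $Y$ compatible on each $K_n'$ and then uses the $k_\omega$ (and hence $k$-space) structure to glue these into a single coarser metrizable topology on all of $Y$. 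The cleanest route is: a $k_\omega$ space each of whose compact sets is metrizable and which admits a continuous injection into a metric space is submetrizable, and such an injection is produced from the coarser metric on $X$ composed with the support-based correspondence.

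For strong countable dimensionality I would argue that $Y$ is the countable union of the closed sets $K_n'$ from the $k_\omega$ sequence, so it suffices to show each compact $K_n'$ is strongly countable dimensional (equivalently finite-or-countable-fd-height, using the characterization given in the introduction), in fact that each $K_n'$ has countable covering dimension behavior inherited from $X$. Here I would use that covering dimension is preserved under $\ell$-dominance for compact metrizable spaces --- Pestov's theorem that dimension is an $\ell$-invariant, extended to dominance via the support map --- so each $K_n'$, being dominated through $L$ by a compact subset of the strongly countable dimensional space $X$, is itself strongly countable dimensional. Since a countable union of closed strongly countable dimensional sets is strongly countable dimensional, $Y$ has this property.

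The main obstacle I anticipate is the submetrizability step, specifically producing a single coarser metrizable topology on all of $Y$ rather than merely on each compact piece. Transferring metrizability pointwise along the support map is routine, but assembling the local metrics into a global coarser metric requires genuinely using that $Y$ is $k_\omega$ (so that a topology is determined by its traces on the $K_n'$ and a continuous injection into a metric space built compatibly across the increasing sequence yields a global coarser metric). I would need to be careful that the coarser topology obtained is genuinely Hausdorff and metrizable globally, which is where the $k_\omega$ cofinality and the injectivity of the assembled map into a countable product of metric spaces do the essential work.
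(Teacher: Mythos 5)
Your proposal has three genuine gaps, the first of which is a circularity. To apply Lemma~\ref{ld_imp}(ii) you need $Y$ to be a $\mu$-space, and you obtain this from ``any submetrizable space is Dieudonn\'e complete, hence $\mu$'' --- but the space whose $\mu$-ness is needed is $Y$, whose submetrizability is one of the conclusions you only establish later (and only after already using $k_\omega$-ness of $Y$, which itself depended on $Y$ being $\mu$). The paper breaks this circle differently: since $X$ is $\sigma$-compact and submetrizable it is cosmic, so $C_p(X)$ is cosmic, so its continuous image $C_p(Y)$ is cosmic, so $Y$ (which embeds in $C_p(C_p(Y))$) is cosmic --- giving submetrizability and Lindel\"ofness (hence the $\mu$ property) of $Y$ \emph{before} Lemma~\ref{ld_imp} is invoked. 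This also replaces your submetrizability step, which as written does not go through: the support map $y \mapsto \mathop{supp}(y)$ is neither single-valued-continuous nor injective, so ``the coarser metric on $X$ composed with the support-based correspondence'' does not yield a continuous injection of $Y$ into a metric space.

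The more serious problem is the strong countable dimensionality step. You propose to transfer dimension along $L$ via ``Pestov's theorem extended to dominance,'' but covering dimension is \emph{not} preserved by $\ell$-dominance: part (a) of Theorem~\ref{main1} of this very paper shows that $I$ $\ell$-dominates $I^n$ for every $n$, so a $1$-dimensional compactum $\ell$-dominates compacta of every finite dimension. Pestov's result is specifically about $\ell$-equivalence, and there is no support-map argument that bounds the dimension of $K_n'$ by that of $\overline{\mathop{supp}(K_n')}$. Even granting that each $K_n'$ is $\ell$-dominated by a compact metrizable subset of $X$, concluding that $K_n'$ is strongly countable dimensional is essentially the compact case of the lemma being proved, so nothing has been gained. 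The paper's actual mechanism is entirely different and is the key idea your proposal is missing: since $L$ is an open continuous linear surjection of $C_k(X)$ onto $C_k(Y)$ (both $k_\omega$), dualizing embeds the free locally convex space on $Y$ as a closed subspace of the free locally convex space on $X$; $Y$ sits as a closed subspace of the former, and the latter is a countable union of closed sets homeomorphic to closed subspaces of $I^n \times X^n$, which are strongly countable dimensional because $X$ is. Without some substitute for this free-locally-convex-space embedding, the dimension-theoretic conclusion does not follow from your outline.
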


\begin{proof}
As $X$ $\sigma$-compact it is submetrizable if and only if it is cosmic. So  $C_p(X)$ is cosmic, and hence so is $C_p(Y)$ and $Y$. In particular $Y$ is $\mu$.

Hence, as $X$ is $k_\omega$ from the preceding lemma, $Y$ is $k_\omega$ and submetrizable, and $L$ is open and continuous as a map of $C_k(X)$ onto $C_k(Y)$. As $X$ and $Y$ are $k$-spaces, the free locally convex topological spaces on $X$ and $Y$ both get their topologies as the dual space of $C_k(X)$ and $C_k(Y)$ respectively. Thus $Y$ embeds as a closed subspace in $L(Y)$, which embeds as a closed subspace in $L(X)$. Also $L(X)$ is a countable union of closed subspaces homeomorphic to closed subspaces of spaces of the form $I^n \times X^n$ for some $n$, so as $X$ is strongly countable dimensional, so is $L(X)$, and hence $Y$.
\end{proof}

\end{document}